\newtheorem{theorem}{Theorem}
\newtheorem{lemma}[theorem]{Lemma}
\newtheorem{question}{Question}
\title{A regular non-weakly discretely generated $P$-space}
\author{Santi Spadaro}
\address{Dipartimento di Ingegneria\\
Universit\`a degli Studi di Palermo\\
viale delle Scienze 8\\
90128 Palermo, Itay}
\email{santidspadaro@gmail.com}
\email{santidomenico.spadaro@unipa.it}
\author{Paul Szeptycki}
\address{Department of Mathematics\\
Faculty of Science and Engineering\\
York University\\
Toronto, ON M3J 1P3, Canada}
\email{szeptyck@yorku.ca}
\subjclass[2020]{Primary: 54A25 Secondary: 54G10, 	
}
\keywords{Discretely generated, $P$-space, convergence properties}
\begin{document}

\begin{abstract} 
We construct a consistent example of a topological space $Y=X \cup \{\infty\}$ such that:

\begin{enumerate}
\item $Y$ is regular.
\item Every $G_\delta$ subset of $Y$ is open.
\item The point $\infty$ is not isolated, but it is not in the closure of any discrete subset of $X$.
\end{enumerate}
 \end{abstract}

\maketitle

\section{Introduction}
A space $X$ is discretely generated if its topology is determined by its discrete subsets, that is, if for every subset $A$ of $X$ and every point $x \in \overline{A}$ there is a discrete set $D \subset A$ such that $x \in \overline{D}$. Discretely generated spaces were introduced by Dow, Tkachenko, Tkachuk and Wilson in \cite{DTTW}. 

Discrete generability may be considered a convergence-type property. Indeed, Fr\'echet-Urysohn spaces and radial spaces are discretely generated. Actually, the authors of \cite{DTTW} proved that even sequential spaces and compact spaces of countable tightness are discretely generated. However, Bella and Simon \cite{BS} constructed, under the continuum hypothesis, a compact pseudoradial space which is not discretely generated. A similar example was constructed by Aurichi \cite{Au} assuming the existence of a Suslin Tree.

Somewhat surprisingly, monotonically normal spaces are discretely generated \cite{DTTW}. So, in particular, every subspace of a linearly ordered topological space is discretely generated. 

One may also consider the following natural weakening of the notion of discrete generability: a space $X$ is \emph{weakly discretely generated} if for every non closed set $A \subset X$ there is a discrete set $D \subset A$ such that $\overline{D} \setminus A \neq \emptyset$. Using the well-known fact that a space is compact if and only if the closure of each one of its discrete subsets is compact (see \cite{Tk}), one readily sees that every compact space is weakly discretely generated.

Bella and Simon \cite{BS} were the first to study discrete generability of the class of $P$-spaces, that is spaces whose $G_\delta$ subsets are open. A $P$-space of character $\omega_1$ is discretely generated, because given a non-closed set $A \subset X$ and a point $x \in \overline{A}$ there is an $\omega_1$-sequence inside $A$ converging to $x$ and every $\omega_1$-convergent sequence without the limit point is a discrete set in a $P$-space. Bella and Simon proved that if $X$ is a regular $P$-space of countable extent and the tightness of $X$ does not exceed $\omega_1$, then $X$ is discretely generated. Moreover, under the principle $P_1$, if $X$ is a $P$-space of tightness $\omega_1$ and character less than $2^{\omega_1}$, then $X$ is discretely generated. It is not known whether every Lindel\"of $P$-space must be weakly discretely generated. Given that Lindel\"of $P$-spaces have a very similar behavior to that of compact spaces, one would expect this question to have a positive answer, but only partial answers have been provided to it so far. For example, Alas, Junqueira and Wilson \cite{AJW} proved that every Lindel\"of $P$-space which has either pseudocharacter less than $\aleph_\omega$ or character at most $\aleph_\omega$ is weakly discretely generated. They also proved that if $X$ is a Lindel\"of $P$-space where every linearly Lindel\"of subspace is Lindel\"of then $X$ is weakly discretely generated. More results about discrete generability of $P$-spaces appear in the article \cite{AJW2} by the same authors.

Both the authors of \cite{BS} and the authors of \cite{AJW} provided various examples of $P$-spaces which are not discretely generated or even not weakly discretely generated. Unfortunately, they are all Hausdorff non-regular spaces. 

It is the aim of this note to construct the first consistent example of a regular $P$-space which is not weakly discretely generated.

All spaces are assumed to be regular. Given a space $X$ we denote by $X_\delta$ the \emph{$G_\delta$}-modification of $X$, that is, the space whose underlying set is $X$ and whose topology is generated by the $G_\delta$ subsets of $X$. Undefined notions may be found in \cite{En}, \cite{Je} and \cite{JCard}.

\section{The main result}

Our example is a modification on the construction of an HFC (see \cite{J}).

\begin{theorem} \label{main}
Assume $2^{\aleph_0}=\aleph_1$ and $2^{\aleph_1}=\aleph_2$. Then there is a regular non-weakly discretely generated $P$-space.
\end{theorem}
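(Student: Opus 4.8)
The plan is to build $Y = X \cup \{\infty\}$ where $X$ is (a subspace of) a power of a discrete space, in the spirit of an HFC construction, and where the neighborhood filter of $\infty$ is generated by a carefully chosen family of subsets of $X$. The base set will be something like $2^{\omega_1}$ or a cleverly chosen subset, so that the $P$-space property comes from refining the usual product topology: recall that if we take the box-type modification that makes $G_\delta$'s open, a basic neighborhood of a point is determined by a countable partial function. I would first fix an enumeration $\langle D_\alpha : \alpha < \omega_2 \rangle$, using $2^{\aleph_1} = \aleph_2$, of all ``potential'' discrete subsets of $X$ — more precisely all countable, or all relevant, subsets that could have $\infty$ in their closure, together with enough bookkeeping data (e.g. candidate discrete $D$ together with a candidate convergent trace). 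The CH assumption $2^{\aleph_0} = \aleph_1$ will be used to guarantee that the underlying space, before adding $\infty$, is a $P$-space of a manageable character, and to run a transfinite recursion of length $\omega_2$ in which at stage $\alpha$ we ``kill'' $D_\alpha$ by choosing the points of $X$ so that $\overline{D_\alpha}$ in $Y$ misses $\infty$, while never committing to enough to make $\infty$ isolated.

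The key steps, in order, are: (1) set up the ground space $Z$ (e.g. $Z = 2^{\omega_1}$ with the $G_\delta$-topology, which is a regular $P$-space) and verify that in $Z$ every convergent $\omega_1$-sequence without its limit is discrete, so that discreteness is easy to diagnose; (2) specify how $\infty$ is attached, namely declare a neighborhood base of $\infty$ to consist of sets $U_s = \{ x \in X : s \subset x \} \cup \{\infty\}$ for $s$ ranging over countable partial functions from $\omega_1$ to $2$ lying in a fixed filter $\mathcal F$ to be constructed — this automatically makes $Y$ a $P$-space (countable intersections of the $U_s$ are again of this form as long as $\mathcal F$ is closed under countable unions of the $s$'s) and, with regularity of $Z$, lets us get regularity of $Y$ provided the $U_s$ can be shrunk to clopen sets; (3) run the $\omega_2$-recursion building $\mathcal F$ (equivalently, building $X$ and the trace of each neighborhood on it) so that for each $\alpha$, if $D_\alpha$ is discrete in $X$ then some $U_s$ with $s \in \mathcal F$ misses $D_\alpha$, using an HFC-style ``lots of points in general position'' argument so that a discrete set, being ``thin,'' cannot fill up every $U_s$; (4) finally check that $\infty$ is not isolated, i.e. every $U_s$ meets $X$ — this is where we must have put enough points of $X$ into every $U_s$, so the recursion has to simultaneously add points to $X$ and forbid points, the usual tension in HFC constructions.

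The main obstacle I expect is step (3)/(4): simultaneously ensuring that $\infty$ is \emph{in the closure of} many sets (so it is not isolated, and indeed so that $Y$ is not weakly discretely generated we need some specific non-closed $A$ — namely $X$ itself, or a prescribed $A$ — to have $\infty$ as a ``hidden'' limit unreachable by discrete subsets) while being \emph{out of the closure of every discrete set}. The HFC property is exactly the combinatorial engine that lets a set be non-closed (with $\infty$ in its closure) yet have all its discrete subsets ``small'' in the sense that they are contained in some basic neighborhood's complement; concretely, one shows that any discrete $D \subset X$ can be covered by countably many ``columns,'' and then a single $s \in \mathcal F$ chosen generically off those columns witnesses $\overline D \not\ni \infty$. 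Getting the counting right — that there are only $\aleph_2$ discrete sets to kill, that each killing step only constrains countably much of $\mathcal F$, and that the ``not isolated'' requirement survives all $\aleph_2$ steps — is the delicate bookkeeping, and it is precisely here that $2^{\aleph_0} = \aleph_1$ (to bound the number of countable partial functions and hence the branching) and $2^{\aleph_1} = \aleph_2$ (to enumerate all discrete subsets in $\omega_2$ steps) are both needed. I would also need a small lemma that the resulting $Y$ is Hausdorff and regular, which should follow from choosing the $s$-neighborhoods to be traces of clopen sets of $Z$ and from $\infty$ being separated from each point of $X$ by a coordinate on which they disagree.
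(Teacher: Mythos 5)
Your plan captures the right flavor (an HFC-style subspace of a $G_\delta$-modified Cantor cube, plus one extra point whose neighbourhoods are traces of clopen boxes), but as written it has gaps that are not just bookkeeping. First, the ground space cannot be $2^{\omega_1}$: under $2^{\aleph_0}=\aleph_1$ there are only $\aleph_1$ countable partial functions $s:\omega_1\to 2$, so every point of your $Y$ --- including $\infty$ --- would have character at most $\aleph_1$, and a regular $P$-space of character at most $\omega_1$ is discretely generated (intersect the first $\alpha$ basic neighbourhoods, pick a point of $A$ in each, and note that an $\omega_1$-sequence converging to $x$ becomes discrete once $x$ is removed). The paper works in $2^{\omega_2}$ and gives $\infty$ character $\aleph_2$ precisely to avoid this. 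Second, your strategy of enumerating ``all potential discrete subsets $D_\alpha$'' and killing them one at a time runs into a counting obstruction: since $|X|$ must be $\aleph_2$ (by the previous point, together with $\infty\in\overline{X}$), $X$ has $2^{\aleph_2}$ subsets, far too many to list in $\omega_2$ steps, unless one first proves that every discrete subset of $X$ is small --- and that smallness claim is the real content of the construction, which you do not supply. Your remark that a discrete set ``can be covered by countably many columns'' is not accurate here: discrete sets can have size $\aleph_1$. The paper instead enumerates $\aleph_2$ many open sets $U_\alpha$ of a special form (unions of $\omega_1$ clopen boxes with pairwise disjoint domains) and arranges $|X\setminus U_\alpha|\le\aleph_1$ for all $\alpha$; a $\Delta$-system argument then yields $hL(X)=\aleph_1$, so all discrete sets have size at most $\aleph_1$, and no enumeration of discrete sets is ever needed: the left-separating family $W_\alpha=\{x: x(\gamma_\alpha)=1\}$ handles every set of size at most $\aleph_1$ at once, since such a set lies in an initial segment $\{x_\eta:\eta<\beta\}$, which is disjoint from $W_\beta$.

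Finally, the tension you correctly identify in your step (4) --- every basic neighbourhood of $\infty$ must meet $X$ while some neighbourhood misses each discrete set --- is resolved in the paper by a second enumeration $\{C_\alpha:\alpha<\omega_2\}$ of all countable subsets of $\omega_2$, each occurring cofinally often, with the point $x_\alpha$ required to lie in $\bigcap\{W_\beta:\beta\in C_\alpha\}$; you would need some such concrete device, and your sketch does not provide one. So the proposal is a reasonable outline of the same genre of construction, but the two decisive ideas (pushing the character of $\infty$ up to $\aleph_2$ by working over $2^{\omega_2}$, and replacing ``kill each discrete set'' by ``prove $hL(X)=\aleph_1$ and left-separate'') are missing, and without them the argument does not go through.
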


Before constructing our space, let us recall the following higher cardinal variant of the $\Delta$-system Lemma (see, for example, Theorem 9.19 of \cite{Je}).

\begin{lemma} \label{Deltasystem}
Let $\kappa$ and $\lambda$ be cardinals such that $\kappa^\lambda=\kappa$. Let $\mathcal{A}$ be a collection of sets of cardinality $\lambda$ such that $|\mathcal{A}|=\kappa^+$. Then there is a subfamily $\mathcal{D} \subset \mathcal{A}$ such that $|\mathcal{D}|=\kappa^+$ and a set $R$ such that $A \cap B=R$, for every distinct $A,B \in \mathcal{D}$.
\end{lemma}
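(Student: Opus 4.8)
The plan is to prove this as the classical generalized $\Delta$-system (sunflower) lemma, letting the single arithmetic hypothesis $\kappa^\lambda=\kappa$ feed a Fodor pressing-down argument on the regular cardinal $\kappa^+$. First I would record two consequences of the hypothesis. Since $2^\lambda\le\kappa^\lambda=\kappa$ and $\lambda<2^\lambda$, we have $\lambda<\kappa$ (so $\lambda^+\le\kappa<\kappa^+$, and I may assume $\lambda$ infinite); and for every $\beta<\kappa^+$ the number of subsets of $\beta$ of size $\le\lambda$ is at most $|\beta|^\lambda\le\kappa^\lambda=\kappa$. Next I would reduce the ground set: writing $\mathcal{A}=\{A_\xi:\xi<\kappa^+\}$, the union $U=\bigcup\mathcal{A}$ has $|U|\le\kappa^+\cdot\lambda=\kappa^+$, so fixing an injection $U\hookrightarrow\kappa^+$ and replacing each $A_\xi$ by its image I may assume $A_\xi\subseteq\kappa^+$ for all $\xi$; since an injection preserves all intersections, a $\Delta$-system among the images pulls back to one in $\mathcal{A}$.

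The heart of the argument is a pressing-down step. Let $S=\{\delta<\kappa^+:\operatorname{cf}(\delta)=\lambda^+\}$, which is stationary in $\kappa^+$ because $\lambda^+$ is a regular cardinal below $\kappa^+$. For $\delta\in S$ the set $A_\delta\cap\delta$ has size at most $\lambda<\lambda^+=\operatorname{cf}(\delta)$, hence is bounded in $\delta$; so $f(\delta)=\sup(A_\delta\cap\delta)<\delta$ defines a regressive function on $S$. By Fodor's lemma $f$ is constant, say equal to $\beta^\ast$, on a stationary set $S'\subseteq S$, so that $A_\delta\cap\delta\subseteq\beta^\ast+1$ for every $\delta\in S'$. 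Now each $A_\delta\cap(\beta^\ast+1)$ is a subset of $\beta^\ast+1$ of size $\le\lambda$, and by the counting remark above there are at most $\kappa<\kappa^+$ such subsets; since $S'$ is stationary (hence of size $\kappa^+$), there is a single set $R$ and a stationary $S''\subseteq S'$ with $A_\delta\cap(\beta^\ast+1)=R$ for all $\delta\in S''$. As $\beta^\ast+1\le\delta$, this gives $A_\delta\cap\delta=R$ and $R\subseteq A_\delta$ for every $\delta\in S''$: the candidate root is isolated.

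It remains to disjointify the tails. I would build the desired subfamily by recursion of length $\kappa^+$: having chosen $\delta^{(\iota)}\in S''$ for $\iota<\nu$, set $\sigma=\sup_{\iota<\nu}\bigl(\sup A_{\delta^{(\iota)}}\bigr)+1$, which is below $\kappa^+$ because $\nu<\kappa^+=\operatorname{cf}(\kappa^+)$ and each $A_{\delta^{(\iota)}}$ is bounded, and then pick $\delta^{(\nu)}\in S''$ with $\delta^{(\nu)}>\sigma$ (possible as $S''$ is unbounded). For $\iota<\nu$ we then have $A_{\delta^{(\iota)}}\subseteq\sigma<\delta^{(\nu)}$, so $A_{\delta^{(\iota)}}\cap A_{\delta^{(\nu)}}\subseteq\delta^{(\nu)}\cap A_{\delta^{(\nu)}}=R$, while $R\subseteq A_{\delta^{(\iota)}}\cap A_{\delta^{(\nu)}}$ holds automatically; hence the intersection is exactly $R$. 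The family $\mathcal{D}=\{A_{\delta^{(\nu)}}:\nu<\kappa^+\}$ is then a $\Delta$-system of size $\kappa^+$ with root $R$, as required.

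The step I expect to carry the real weight is the isolation of the root in the second paragraph: the root may a priori live arbitrarily low, so one cannot simply separate the sets by their minima. The pressing-down argument sidesteps this by using the index $\delta$ simultaneously as the ordinal on which Fodor is applied, turning ``the part of $A_\delta$ below $\delta$'' into a regressive function; this is exactly where the regularity of $\kappa^+$ and the bound $\operatorname{cf}(\delta)>\lambda$ are used, and the subsequent pigeonhole that fixes $R$ is exactly where $\kappa^\lambda=\kappa$ is used. Everything else, namely the ground-set reduction and the tail disjointification, is routine bookkeeping with the regularity of $\kappa^+$.
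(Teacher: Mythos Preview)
The paper does not supply its own proof of this lemma; it is simply quoted as Theorem~9.19 of Jech~\cite{Je}. Your Fodor-based argument is correct and is exactly the standard proof found in that reference (and in Kunen), so there is nothing to compare.
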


\begin{proof} [Proof of Theorem $\ref{main}$]
Let $\{U_\alpha: \alpha < \aleph_2\}$ be an enumeration of all open subsets $U \subset (2^{\omega_2})_\delta$ such that $U=\bigcup \{[\sigma_\alpha]: \alpha < \omega_1\}$ and $\{dom(\sigma_\alpha): \alpha < \omega_1 \}$ is a pairwise disjoint family of countable sets. Let $\{C_\alpha: \alpha < \omega_2\}$ be an enumeration of all countable subsets of $\omega_2$ such that, for every $\alpha < \omega_2$, $C_\alpha \subset \alpha$ and for every $C \subset \omega_2$, the set $\{\alpha < \omega_2: C_\alpha = C \}$ has cardinality $\omega_2$.

We will recursively choose an increasing sequence of ordinals $\{\gamma_\alpha: \alpha < \omega_2 \}$ and points $x_\alpha \in 2^{\omega_2}$ in such a way that, if we let $W_\alpha=\{x \in 2^{\omega_2}: x(\gamma_\alpha)=1\}$, then:

$$(*)_\alpha \quad x_\alpha \in \bigcap \{U_\beta: \beta \leq \alpha\} \cap \bigcap \{W_\beta: \beta \in C_\alpha \}$$

and such that the $W_\alpha$'s left separate $X=\{x_\alpha: \alpha < \omega_2\}$.

Suppose that $\alpha < \omega_2$ is given and that $\gamma_\beta$ and $x_\beta$ have been defined for every $\beta < \alpha$ in such a way that:

\begin{enumerate}
\item For each $\beta < \alpha$, $x_\beta (\gamma_\beta)=1$ and $x_\beta(\gamma)=0$ for every $\gamma > \gamma_\beta$.
\item For each $\beta < \alpha$, $x_\beta$ satisfies $(*)_\beta$
\item For each $\beta < \eta < \alpha$, $\gamma_\beta < \gamma_\eta$.
\end{enumerate}

To define $x_\alpha$ and $\gamma_\alpha$, first re-enumerate $\{U_\beta: \beta < \alpha \}$ as $\{V_\xi: \xi < \omega_1\}$. Recall that each $V_\xi$ can be represented as $\bigcup \{[\sigma_{\xi, \eta}]: \eta<\omega_1\}$, where $\{dom(\sigma_{\xi, \eta}): \eta < \omega_1 \}$ is a pairwise disjoint family of countable subsets of $\omega_2$. Therefore we can choose $\eta_\xi$, for each $\xi < \omega_2$ so that the elements of the set $\{dom(\sigma_{\xi, \eta_\xi}): \xi < \omega_1 \}$ are pairwise disjoint and each of them is disjoint from $\{\gamma_\beta: \beta \in C_\alpha \}$.

Next choose $\gamma_\alpha$ larger than all the previously chosen $\gamma_\beta$ and also above $\bigcup \{dom(\sigma_{\xi, \eta_\xi}): \xi < \omega_1 \}$. Since all the domains are pairwise disjoint we can now choose:

$$x_\alpha \in \bigcap \{W_\beta: \beta \in C_\alpha\} \cap \bigcap \{[\sigma_{\xi, \eta_\xi}]: \xi < \omega_1 \}$$

We can also require that $x_\alpha (\gamma_\alpha)=1$ and $x_\alpha(\gamma)=0$, for every $\gamma > \gamma_\alpha$, since $\gamma_\alpha$ is above all the domains of the open sets in the above intersection.

At the end of the induction let $X=\{x_\alpha: \alpha < \omega_2\}$ equipped with the topology inherited from $(2^{\omega_2})_\delta$. Obviously $X$ is a regular $P$-space.

\bigskip

\noindent {\bf Claim.} $hL(X)=\aleph_1 < \aleph_2= d(X)=w(X)=|X|$.

\begin{proof}[Proof of Claim 2] Notice that the $W_\alpha$'s left separate $X$ and hence $d(X)=\omega_2$.

It remains to prove that $hL(X)=\aleph_1$. Observe, first of all that, by construction, $|X \setminus U_\alpha| \leq \omega_1$, for every $\alpha < \omega_2$. 

Suppose by contradiction that $hL(X) > \omega_1$. Then we can find an $\omega_2$-sized right separated subspace $\{y_\alpha: \alpha < \omega_2 \}$ of $X$. That means we can find countable partial functions $\{\sigma_\alpha: \alpha < \omega_2 \}$ such that $y_\alpha \in [\sigma_\alpha]$ and $y_\beta \notin [\sigma_\alpha]$, for every $\beta > \alpha$. Since $\omega_2^\omega=\omega_2$, we can apply Lemma $\ref{Deltasystem}$ to find an $\omega_2$-sized set $S \subset \omega_2$ such that the family $\{dom(\sigma_\alpha): \alpha \in S \}$ forms a $\Delta$-system with root $R$. Let $\sigma'_\alpha=\sigma_\alpha \upharpoonright (dom(\sigma_\alpha) \setminus R)$.  Since $|2^R|=2^{\aleph_0}=\aleph_1$, then there has to be $T \in [S]^{\omega_2}$ such that $\sigma_\alpha \upharpoonright R=\sigma_\beta \upharpoonright R$, for every $\alpha, \beta \in T$. Hence, for every $\alpha \in T$ and for every $\beta \in T$ such that $\beta > \alpha$ we have $y_\beta \notin [\sigma'_\alpha]$.

Now let $U=\bigcup \{[\sigma'_\alpha]: \alpha < \omega_1 \}$. Since $\{dom(\sigma'_\alpha): \alpha < \omega_1 \}$ is an $\omega_1$-sized pairwise disjoint family of countable sets, there is $\delta < \omega_2$ such that $U=U_\delta$.

However $|X \setminus U| > \aleph_1$, which is a contradiction.
\renewcommand{\qedsymbol}{$\triangle$}
\end{proof}

Choose now a point $\infty \notin X$ and let $Y=X \cup \{\infty\}$. Points of $X$ have their usual neighbourhoods, while a basic open neighbourhood of $\infty$ is a set of the form.

$$W_C=\{\infty\} \cup \bigcap \{W_\alpha: \alpha \in C \}$$

where $C$ is a countable subset of $\omega_2$.

Clearly $Y$ is a $P$-space and since, for every countable $C$, the set $W_C \cap X$ is clopen in $X$, the topology of $Y$ is still regular. Since discrete sets in $X$ have cardinality bounded by $\omega_1$ and the $W_\alpha$'s left separate $X$, it follows that $\infty$ is not in the closure of any discrete subset of $X$, but it is in the closure of $X$, because $W_C$ is non-empty for every countable set $C$. This implies that $Y$ is not weakly discretely generated.

\end{proof}

\section{Open Questions}

The following natural question is still open.

\begin{question}
Is there a ZFC example of a regular non-weakly discretely generated $P$-space?
\end{question}

Our example is not Lindel\"of, so the following question of Bella and Simon from \cite{BS} is also open.

\begin{question}
Is there a regular Lindel\"of non-discretely generated $P$-space?
\end{question}

\section{Acknowledgements}

The first author is grateful to INdAM-GNSAGA for partial financial support.

\end{document}